\let\goth\mathfrak
\def\gl{\goth l}
\def\beq{\begin{equation*}}
\def\eeq{\end{equation*}}
\def\bea{\begin{aligned}}
\def\eea{\end{aligned}}
\def\bee{\begin{enumerate}}
\def\eee{\end{enumerate}}
\def\cplus{\hbox{$\supset${\raise1.05pt\hbox{\kern -0.55em
${\scriptscriptstyle +}$}}\ }}
\numberwithin{equation}{section}
\theoremstyle{plain}
\newtheorem{theorem}{Theorem}[section]
\newtheorem{proposition}[theorem]{Proposition}
\newtheorem{lemma}[theorem]{Lemma}
\newtheorem{corollary}[theorem]{Corollary}
\theoremstyle{definition}
\newtheorem{definition}[theorem]{Definition}
\newtheorem{example}[theorem]{Example}
\newtheorem*{remark}{Remark}
\theoremstyle{remark}
\newtheorem*{acknowledgement}{Acknowledgements}
\def\N{\mathbb N}
\def\P{\mathbb P}
\def\B{\mathbf{B}}
\def\P{\mathbf{P}}
\def\gl{\goth{gl}}
\def\ot{\otimes}
\def\ra{\longrightarrow}
\def\map{\longmapsto}
\def\sh{{\rm sh}}
\newcommand{\sqo}{\bsegment \move(0
0)\lvec(10 0)\lvec(10 10)\lvec(0 10)\lvec(0 0) \htext(3 3){1}
\esegment \rmove(10 0)}
\newcommand{\dsq}{\bsegment \lpatt(0.5 1) \move(0
0)\lvec(10 0)\lvec(10 10)\lvec(0 10)\lvec(0 0) \esegment \rmove(10
0)}
\newcommand{\sq}{\bsegment \move(0
0)\lvec(10 0)\lvec(10 10)\lvec(0 10)\lvec(0 0) \esegment \rmove(10
0)}
\newcommand{\bsq}{\bsegment \move(0
0)\lvec(10 0)\lvec(10 10)\lvec(0 10)\lvec(0 0) \lfill f:0.7
\esegment \rmove(10 0)}
\newcommand{\sqt}{ \bsegment \move(0
0)\lvec(10 0)\lvec(10 10)\lvec(0 10)\lvec(0 0) \htext(3 3){2}
\esegment \rmove(10 0)}
\newcommand{\sqth}{ \bsegment \move(0
0)\lvec(10 0)\lvec(10 10)\lvec(0 10)\lvec(0 0) \htext(3 3){3}
\esegment \rmove(10 0)}
\newcommand{\sqf}{ \bsegment \move(0
0)\lvec(10 0)\lvec(10 10)\lvec(0 10)\lvec(0 0) \htext(3 3){4}
\esegment \rmove(10 0)}
\newcommand{\sqfi}{ \bsegment \move(0
0)\lvec(10 0)\lvec(10 10)\lvec(0 10)\lvec(0 0) \htext(3 3){5}
\esegment \rmove(10 0)}
\newcommand{\sqbo}{ \bsegment \move(0
0)\lvec(10 0)\lvec(10 10)\lvec(0 10)\lvec(0 0) \htext(1
2){\tiny{$b_1$}} \esegment \rmove(10 0)}
\newcommand{\sqbt}{ \bsegment \move(0
0)\lvec(10 0)\lvec(10 10)\lvec(0 10)\lvec(0 0) \htext(1
2){\tiny{$b_2$}} \esegment \rmove(10 0)}
\newcommand{\sqbn}{ \bsegment \move(0
0)\lvec(10 0)\lvec(10 10)\lvec(0 10)\lvec(0 0) \htext(1
2){\tiny{$b_N$}} \esegment \rmove(10 0)}
\begin{document}
\title[Admissible Pictures and $U_q(\gl(m,n))$-Littlewood-Richardson Tableaux]
{Admissible Pictures and $U_q(\gl(m,n))$-\\  Littlewood-Richardson
Tableaux}
\author[Jung, Kang, Lyoo]{Ji Hye Jung$^{1,2}$, Seok-Jin Kang$^{1,3}$ and Young-Wook Lyoo }

\address{Department of Mathematical Sciences
         and
         Research Institute of Mathematics \\
         Seoul National University \\ San 56-1 Sillim-dong, Gwanak-gu \\ Seoul 151-747, Korea}

         \email{jhjung@math.snu.ac.kr}

\address{Department of Mathematical Sciences
         and
         Research Institute of Mathematics \\
         Seoul National University \\ San 56-1 Sillim-dong, Gwanak-gu \\ Seoul 151-747, Korea}

         \email{sjkang@math.snu.ac.kr}

\address{Seoul Science High School \\ Wooam-Gil 63,  Jongro-gu \\ Seoul 110-530, Korea }

         \email{yw.lyoo@gmail.com}

\thanks{$^{1}$This research was supported by KRF Grant \# 2007-341-C00001.}
\thanks{$^{2}$This research was supported by BK21 Mathematical Sciences Division.}
\thanks{$^3$This research was supported by National Institute for Mathematical Sciences (2010 Thematic Program, TP1004).}

\begin{abstract}

We construct a natural bijection between the set of admissible
pictures and the set of $U_q(\gl(m,n))$-Littlewood-Richardson
tableaux.

\end{abstract}

\maketitle

\section*{Introduction}

The notion of {\it pictures} was introduced by James and Peel
\cite{JP} and Zelevinsky \cite{Z}. In \cite{NS}, Nakashima and
Shimojo considered the notion of {\it admissible pictures} and
showed that there exists a natural bijection between the set of
admissible pictures and the set of
$U_q(\gl(r))$-Littlewood-Richardson tableaux. More precisely, let
$Y$, $W$, $Z$ be Young diagrams with at most $r$ rows such that
$|Y|+|W|=|Z|$, and let $A$, $A'$ be admissible orders on $Z/Y$,
$W$, respectively. Then Nakashima and Shimojo constructed an
explicit natural bijection between the set $\P(W, Z/Y; A, A')$ of
$(A, A')$-admissible pictures and the set $\B(W)_{Y}^{Z}[A']$ of
$U_q(\gl(r))$-Littlewood-Richardson tableaux. This result was
already obtained in \cite{CS} and \cite{FG} by a purely
combinatorial method. Nakashima and Shimojo gave an alternative
proof using the theory of $U_q(\gl(r))$-crystals.

In this paper, we generalize the main result of \cite{NS} to the
case when $W$ is a skew Young diagram (Theorem
\ref{thm:skew_case}). Our proof follows the outline given in
\cite{NS} with some necessary modifications to deal with
semistandard skew tableaux.

Moreover, we introduce the notion of {\it
$U_q(\gl(m,n))$-Littlewood-Richardson tableaux} arising from the
theory of $U_q(gl(m,n))$-crystals, and show that there exists a
natural bijection between the set of admissible pictures and the
set of $U_q(\gl(m,n))$-Littlewood-Richardson tableaux (Theorem
\ref{thm:main}). Namely, let $Y$, $W$, $Z$ be $(m,n)$-hook Young
diagrams such that $|Y|+|W|=|Z|$ and let $A$, $A'$ be admissible
orders on $W$, $Z/Y$, respectively. Denote by $LR(Y,W)^{Z}[A']$
the set of $U_q(\gl(m,n))$-Littlewood-Richardson tableaux
associated with $(Y,W,Z)$ and $A'$. We construct an explicit
natural bijection between $\P(Z/Y,W; A, A')$ and $LR(Y,
W)^{Z}[A']$. As a corollary, we show that the
$U_q(\gl(m,n))$-Littlewood-Richardson coefficients and the
$U_q(\gl(r))$-Littlewood-Richardson coefficients are the same.

\vskip 1cm


\section{Admissible Pictures}

We first recall some basic notions that are used in this paper. A
{\it Young diagram} is a collection of boxes arranged in
left-justified rows with a weakly decreasing number of boxes in
each row as we go down. A Young diagram may be identified with a
{\it partition} $Y=(Y_1 \ge Y_2 \ge \cdots )$, where $Y_i$ is the
number of boxes in the $i$-th row. For Young diagrams $Y$ and $Z$
with $Z \supset Y$, we denote by $Z / Y$ the {\it skew Young
diagram} obtained by removing $Y$ from $Z$. For a skew Young
diagram $Y$, the {\it size} of $Y$, denoted by $|Y|$, is defined
to be the total number of boxes in $Y$. A {\it Young tableau}
(resp. {\it skew tableau}) $T$ is a filling of a Young diagram
(resp. skew Young diagram) with positive integers. We say that $T$
is {\it semistandard} if

\ \ (i) the entries in each row are weakly increasing from left to
right,

\ \ (ii) the entries in each column are strictly increasing from
top to bottom.

\noindent The (skew) Young diagram $Y$ is called the {\it shape}
of $T$. We often write $\sh(T)=Y$.

The notion of {\it pictures} was first introduced by James and
Peel \cite{JP} and Zelevinsky \cite{Z}. In this paper, we use a
generalized definition given by Nakashima and Shimojo \cite{NS}.

\begin{definition}
 Let $X$ and $Y$ be subsets of $\N \times \N$, where $\N$ is the set
 of positive integers.

(a) For $(a,b), (c,d) \in X$, we define
$$(a,b) \le_{P} (c,d) \ \ \text{if and only if} \ \ a \le c, \ b \le
d.$$

(b) A total order $\le_{A}$ on $X$ is said to be {\it admissible}
if
$$(a, b) \le_{A} (c,d) \ \ \text{whenever} \ \ a \le c, b \ge d.$$

\end{definition}

\noindent Thus $(a, b) \le_{A} (c, d)$ whenever $(a, b)$ lies in
the northeast of $(c, d)$.

\begin{example}
In this example, we introduce two typical examples of admissible
order.

(a) The {\it Middle Eastern order} $\le_{ME}$ on $X$ is defined by
$$(a, b) \le_{ME} (c,d)  \ \ \text{if and only if} \ \ a < c, \
\text{or} \ a=c, b \ge d.$$

(b) The {\it Far Eastern order} $\le_{FE}$ on $X$ is defined by
$$(a, b) \le_{FE} (c,d)  \ \ \text{if and only if} \ \ b > d, \
\text{or} \ b=d, a \le c.$$

\end{example}

Note that a (skew) Young diagram $Y$ may be regarded as a subset
of $\N \times \N$ by identifying the box in the $i$-th row and
$j$-th column with $(i,j) \in \N \times \N$. Hence we may consider
the notion of admissible orders on $Y$.

\begin{definition} Let $X$, $Y$ be subsets of $\N \times \N$ and
let $\le_{A}$ (resp. $\le_{A'}$) be an admissible order on $Y$
(resp. on $X$).

(a) A map $f:X \to Y$ is {\it $PA$-standard} if $f(a,b) \le_{A}
f(c,d)$ whenever $(a, b) \le_{P} (c,d)$.

(b) A bijection $f:X \to Y$ is called an {\it $(A, A')$-admissible
picture} if $f:X \to Y$ is $PA$-standard and $f^{-1}: Y \to X$ is
$PA'$-standard.

\end{definition}

We denote by $\P(X, Y; A, A')$ the set of all $(A, A')$-admissible
pictures from $X$ to $Y$. Since (skew) Young diagrams may be
considered as subsets of $\N \times \N$, for any pair of (skew)
Young diagrams $Y$ and $W$ with $|Y|=|W|$, we may define the
notion of admissible pictures from $Y$ to $W$ and vice versa.

\begin{example}
Let $X=\{(1,1), (1,2), (2,1), (2,2)\}$ and $Y=\{(1,3), (1,4),
(2,2), (2,3) \}$ be skew Young diagrams. Pictorially, we have $X=$
\raisebox{-0.5 \height}{
\begin{texdraw}
\drawdim em \setunitscale 0.15  \linewd 0.4    \sq \sq \move (0
10) \sq \sq
\end{texdraw}}  , $Y=$ \raisebox{-0.5\height}{
\begin{texdraw}
\drawdim em \setunitscale 0.15  \linewd 0.4 \dsq \dsq \sq \sq
\move (0 -10) \dsq \sq \sq
\end{texdraw}} .

Define a map $f:X \to Y$ by
$$f(1,1)=(1,4), \  f(1,2) = (1,3), \ f(2,1) = (2,3), \
f(2,2)=(2,2).$$ Then it is easy to verify that $f$ is an $(A,
A')$-admissible picture for any admissible orders $A$ and $A'$.
\end{example}

Let $f:X \to Y$ be an $(A, A')$-admissible picture. We denote by
$f_1$ and $f_2$ the 1st and 2nd coordinate functions,
respectively. That is, if $f(i,j) = (a,b)$, then $f_1(i,j) = a$,
$f_2(i,j)=b$. For simplicity, we often write $f = (f_1, f_2)$.

\vskip 1cm


\section{$U_q(\gl(r))$-Littlewood-Richardson tableaux}

In this section, we review the main result of \cite{NS} on the 1-1
correspondence between the set of admissible pictures and the set
of $U_q(\gl(r))$-Littlewood-Richardson tableaux. More details on
$U_q(\gl(r))$-crystals can be found in \cite{HK}.

Let $$\B: \ \ \boxed{1} \overset{1} \longrightarrow \boxed{2}
\overset{2} \longrightarrow \cdots \overset{n-2} \longrightarrow
\boxed{n-1} \overset{n-1} \longrightarrow \boxed{n} $$ be the
crystal of the vector representation of $U_q(\gl(r))$. For a
(skew) Young diagram $Y$, we denote by $\B(Y)$ the set of all
semistandard tableaux of shape $Y$ with entries in $\{1, 2,
\cdots, r \}$. If $|Y|=N$, a semistandard tableau $T \in \B(Y)$
can be identified with the element
$$\text{\raisebox{-0.2\height}{
\begin{texdraw}
\drawdim em \setunitscale 0.1  \linewd 0.4 \sqbo \end{texdraw}}
$\ot$  \raisebox{-0.2\height}{
\begin{texdraw}
\drawdim em \setunitscale 0.1  \linewd 0.4 \sqbt \end{texdraw}}
$\ot$ $\cdots$ $\ot$ \raisebox{-0.2\height}{
\begin{texdraw}
\drawdim em \setunitscale 0.1  \linewd 0.4 \sqbn \end{texdraw}}
$\in \B^{\ot N}$},$$
 where $b_1 , b_2 , \cdots , b_{N}$ are the entries of $T$ listed by a given admissible order
$A$. Thus we get an embedding
$$R_{A}: \B(Y) \rightarrow \B^{\ot N} \quad \text{given by} \quad T \mapsto
\text{\raisebox{-0.2\height}{
\begin{texdraw}
\drawdim em \setunitscale 0.1  \linewd 0.4 \sqbo \end{texdraw}}
$\ot$  \raisebox{-0.2\height}{
\begin{texdraw}
\drawdim em \setunitscale 0.1  \linewd 0.4 \sqbt \end{texdraw}}
$\ot$ $\cdots$ $\ot$ \raisebox{-0.2\height}{
\begin{texdraw}
\drawdim em \setunitscale 0.1  \linewd 0.4 \sqbn \end{texdraw}}
},$$ which is called the {\it admissible reading of $\B(Y)$ by
$A$}.

For example, the {\it Middle-Eastern reading} $R_{ME}(T)$ of $T
\in \B(Y)$ reads the entries by moving across the  rows from right
to left and top to bottom. On the other hand, the {\it Far-Eastern
reading} $R_{FE}(T)$ of $T \in \B(Y)$ proceeds down the columns
from top to bottom and from right to left.

\begin{example}
  \beq \bea
 R_{ME}\left( \raisebox{-0.3\height}{
\begin{texdraw}
\drawdim em \setunitscale 0.15  \linewd 0.4  \sqt \sqt \sqfi
\move(0 -10) \sqth \sqf \end{texdraw}} \, \right) =
  \boxed{5} \ot \boxed{2} \ot \boxed{2} \ot \boxed{4} \ot \boxed{3}
\eea \eeq

  \beq \bea
R_{FE}\left( \raisebox{-0.3\height}{
\begin{texdraw}
\drawdim em \setunitscale 0.15  \linewd 0.4  \sqt \sqt \sqfi
\move(0 -10) \sqth \sqf \end{texdraw}} \, \right) =  \boxed{5} \ot
\boxed{2} \ot \boxed{4} \ot \boxed{2} \ot \boxed{3}. \eea \eeq

\end{example}

An admissible reading $R_{A}$ provides $\B(Y)$ with a
$U_q(\gl(r))$-crystal structure by the tensor product rule. In
\cite{HK}, it was shown that the $U_q(\gl(r))$-crystal structure
on $\B(Y)$ does not depend on the choice of admissible reading
$R_{A}$.

Let $Y$ be a Young diagram. We denote by $Y[j]$ the diagram
obtained from $Y$ by adding a box at the $j$-th row. If $Y[j]$ is
a Young diagram, then $\B(Y[j])$ is the set of semistandard
tableaux of shape $Y[j]$. If $Y[j]$ is {\it not} a Young diagram,
we define $\B(Y[j]) = \emptyset$. More generally, $Y[j_1, \cdots,
j_{N}]$ is the diagram obtained from $Y[j_1, \cdots, j_{N-1}]$ by
adding a box at the $j_{N}$-th row and $\B(Y[j_1, \cdots, j_N])$
is the set of semistandard tableaux of shape $Y[j_1, \cdots, j_N]$
if $Y[j_1, \cdots, j_k]$ is a Young diagram for all $k=1, \cdots,
N$. We define $\B(Y[j_1, \cdots, j_N]) = \emptyset$ if $Y[j_1,
\cdots j_k]$ is not a Young diagram for some $k=1, \cdots, N$.

\vskip 3mm

\begin{example}
For $Y$= \raisebox{-0.5\height}{
\begin{texdraw}
\drawdim em \setunitscale 0.15  \linewd 0.4  \sq \sq \move(0 -10)
\sq \sq \end{texdraw}}\ , $Y[1,3,1,2]$ is obtained as follows:
\vskip3mm

\raisebox{-0.5\height}{
\begin{texdraw}
\drawdim em \setunitscale 0.15  \linewd 0.4  \sq \sq \move(0 -10)
\sq  \sq
\end{texdraw}}
$\longrightarrow$ \raisebox{-0.5\height}{
\begin{texdraw}
\drawdim em \setunitscale 0.15  \linewd 0.4  \sq \sq \bsq \move(0
-10) \sq  \sq  \end{texdraw}} $\longrightarrow$
\raisebox{-0.5\height}{
\begin{texdraw}
\drawdim em \setunitscale 0.15  \linewd 0.4  \sq \sq \sq \move(0
-10) \sq \sq  \move(0 -20) \bsq \end{texdraw}} $\longrightarrow$
\raisebox{-0.5\height}{
\begin{texdraw}
\drawdim em \setunitscale 0.15  \linewd 0.4  \sq \sq \sq \bsq
\move(0 -10) \sq  \sq  \move(0 -20) \sq \end{texdraw}}
$\longrightarrow$ \raisebox{-0.5\height}{
\begin{texdraw}
\drawdim em \setunitscale 0.15  \linewd 0.4  \sq \sq \sq \sq
\move(0 -10) \sq \sq  \bsq  \move(0 -20) \sq \end{texdraw}}

\vskip5mm \hspace{5 mm} $Y$ \hspace{2 mm} $\longrightarrow$
\hspace{4 mm} $Y[1]$  \hspace{5 mm} $\longrightarrow$ \hspace{5
mm}$Y[1,3]$ \hspace{2 mm} $\longrightarrow$ \hspace{5 mm}
$Y[1,3,1]$ \hspace{3 mm}$\longrightarrow$ \hspace{5 mm}
$Y[1,3,1,2]$.

\end{example}

With this notation, Nakashima obtained the following decomposition
of the tensor product of $U_q(\gl(r))$-crystals.

\begin{proposition} \cite{N}
Let $Y$ and $W$ be Young diagrams with at most $r$ rows. Then
there exists a $U_q(\gl(r))$-crystal isomorphism
\begin{equation} \label{eq:gl(r)-decomp}
    \B(Y) \otimes \B(W) \cong \displaystyle\bigoplus_{\substack{T \in \B(W),\\
R_{FE}(T)=\boxed{j_1} \otimes \cdots \otimes \boxed{j_N}}}^{}
\B(Y[j_1,\cdots,j_N]).
\end{equation}
\end{proposition}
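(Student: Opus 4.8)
The plan is to realize both sides of \eqref{eq:gl(r)-decomp} as subcrystals of $\B(Y)\ot\B^{\ot N}$ and to match up connected components. \emph{Step 1 (adding one box).} For any Young diagram $\mu$ with at most $r$ rows I would first prove the Pieri-type isomorphism
\[
\B(\mu)\ot\B\ \cong\ \bigoplus_{j=1}^{r}\B(\mu[j]),
\]
with the usual convention $\B(\mu[j])=\emptyset$ when $\mu[j]$ is not a Young diagram. Since $\B(\mu)\ot\B$ is the crystal of a finite-dimensional, hence completely reducible, $U_q(\gl(r))$-module, it is the disjoint union of its connected components, one for each highest weight element. Writing $u_\mu\in\B(\mu)$ for the highest weight element, so that $\varepsilon_k(u_\mu)=0$ and $\varphi_k(u_\mu)=\mu_k-\mu_{k+1}$, a short computation with the tensor product rule shows that every highest weight element of $\B(\mu)\ot\B$ has the form $u_\mu\ot\boxed{j}$, and that $u_\mu\ot\boxed{j}$ is highest weight exactly when $j=1$ or $\mu_{j-1}>\mu_j$, i.e.\ exactly when $\mu[j]$ is a Young diagram. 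As $\wt(u_\mu\ot\boxed{j})=\wt(\mu[j])$, the displayed isomorphism follows, and it identifies the summand $\B(\mu[j])$ with the connected component of $\B(\mu)\ot\B$ generated by $u_\mu\ot\boxed{j}$.

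\emph{Step 2 (iterating).} Applying Step 1 to the $N$ copies of $\B$ in $\B(Y)\ot\B^{\ot N}$ one factor at a time, with the empty-set convention propagating correctly through the iteration, gives
\[
\B(Y)\ot\B^{\ot N}\ \cong\ \bigoplus_{(j_1,\dots,j_N)\in\{1,\dots,r\}^N}\B(Y[j_1,\dots,j_N]),
\]
and a straightforward induction identifies, for every sequence such that $Y[j_1,\dots,j_k]$ is a Young diagram for all $k$, the summand $\B(Y[j_1,\dots,j_N])$ with the connected component of $\B(Y)\ot\B^{\ot N}$ generated by $u_Y\ot\boxed{j_1}\ot\cdots\ot\boxed{j_N}$. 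In particular the connected components of $\B(Y)\ot\B^{\ot N}$ are in bijection with such sequences, via their highest weight elements.

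\emph{Step 3 (cutting out $\B(W)$).} By \cite{HK} the far-eastern reading $R_{FE}\colon\B(W)\to\B^{\ot N}$ is a crystal embedding, so we may identify $\B(W)$ with its image, a subcrystal of $\B^{\ot N}$. Then $\B(Y)\ot\B(W)$ is a subcrystal of $\B(Y)\ot\B^{\ot N}$, hence a union of connected components of the latter. By Step 2, the component indexed by $(j_1,\dots,j_N)$ lies in $\B(Y)\ot\B(W)$ if and only if its highest weight element $u_Y\ot\boxed{j_1}\ot\cdots\ot\boxed{j_N}$ does, i.e.\ if and only if $\boxed{j_1}\ot\cdots\ot\boxed{j_N}=R_{FE}(T)$ for some $T\in\B(W)$; such a $T$ is unique because $R_{FE}$ is injective, while those $T$ for which some initial segment $Y[j_1,\dots,j_k]$ fails to be a Young diagram contribute only empty summands. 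Collecting the surviving components and reindexing by $T$ yields exactly \eqref{eq:gl(r)-decomp}.

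The signature bookkeeping in Steps 1--2 is routine; the real content is Step 3, namely the observation that $\B(Y)\ot\B(W)$ sits inside $\B(Y)\ot\B^{\ot N}$ as a \emph{full} union of connected components and that membership of a component is detected by its highest weight vector. This is precisely what allows the far-eastern readings of \emph{all} semistandard tableaux of shape $W$---not merely the highest weight ones---to serve as the bookkeeping device in \eqref{eq:gl(r)-decomp}. (The same argument goes through verbatim with any admissible reading in place of $R_{FE}$, which is the source of the reading-independence exploited later in the paper.)
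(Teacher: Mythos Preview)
The paper does not supply its own proof of this proposition; it simply quotes the result from \cite{N}. Your argument is correct and is essentially the standard crystal-theoretic proof: iterate the Pieri rule $\B(\mu)\ot\B\cong\bigoplus_j\B(\mu[j])$ to decompose $\B(Y)\ot\B^{\ot N}$, keep explicit track of the highest weight vectors $u_Y\ot\boxed{j_1}\ot\cdots\ot\boxed{j_N}$, and then observe that $\B(Y)\ot\B(W)$, embedded via $R_{FE}$, is a full union of connected components, so that membership is detected on highest weight vectors. This is precisely the mechanism behind Nakashima's original argument in \cite{N}, so there is no genuine methodological difference to report. Your closing remark about replacing $R_{FE}$ by an arbitrary admissible reading is also on point and matches how the paper uses the result immediately afterward.
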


By the same argument in \cite[Proposition 4.13]{KK}, one can show
that $Y[j_1, \cdots, j_N]$ are the same for all admissible reading
$R_{A}(T)= \boxed{j_1} \otimes \cdots \otimes \boxed{j_N}$. Hence
the tensor product decomposition \eqref{eq:gl(r)-decomp} can be
restated as
\begin{equation} \label{eq:gl(r)-decomp2}
    \B(Y) \otimes \B(W) \cong \displaystyle\bigoplus_{\substack{T \in \B(W),\\
R_{A}(T)=\boxed{j_1} \otimes \cdots \otimes \boxed{j_N}}}^{}
\B(Y[j_1,\cdots,j_N]),
\end{equation}
where $R_{A}$ is an arbitrary admissible reading.

Motivated by this, we make the following definitions. Let $Y$,
$W$, $Z$ be the Young diagrams with at most $r$ rows such that
$|Y|+|W|=|Z|$, and let $A$ be an admissible order on $W$. We
define $\B(W)_{Y}^{Z}[A]$ to be the set of semistandard tableaux
of shape $W$ satisfying the following conditions:
\begin{equation} \label{eq:LRcrystal}
\begin{aligned}
& \text{if $R_A(T)=\boxed{j_1} \otimes \cdots \otimes
\boxed{j_N}$,
then} \\
& \ \ \text{(i) $Y[j_1, \cdots, j_k]$ is a Young diagram for all
$k=1,
\cdots, N$}, \\
& \ \ \text{(ii) $Y[j_1, \cdots, j_N]= Z$.}
\end{aligned}
\end{equation}

Since the decomposition \eqref{eq:gl(r)-decomp2} is unique, the
set $\B(W)_{Y}^{Z}[A]$ are the same for all admissible orders $A$
on $W$. Hence we may write
$$\B(W)_{Y}^{Z} = \B(W)_{Y}^{Z}[A]$$
for any admissible order $A$ on $W$.

\begin{definition} \hfill

(a) A semistandard tableau $T$ in $\B(W)_{Y}^{Z}$ is called a {\it
$U_q(\gl(r))$-Littlewood-Richardson tableau} associated with the
triple $(Y, W, Z)$.

(b) The number $c_{Y, W}^{Z} = | \B(W)_{Y}^{Z}|$ is called the
{\it $U_q(\gl(r))$-Littlewood-Richardson coefficient} associated
with $(Y, W, Z)$.

\end{definition}

\vskip 3mm

Now we explain the main result of \cite{NS}. Let $Y$ be a Young
diagram and let $T$ be a semistandard tableau in $\B(Y)$. We
denote by $T_{ij}$ the $(i,j)$-entry of $T$. For each $k \in \N$,
we define
$$T^{(k)} = \{(i,j) \in Y \mid T_{ij}=k \}.$$
Since no entry can occur more than once in any column of $T$, we
may write
$$T^{(k)} = \{(a_1, b_1), \cdots, (a_r, b_r) \},$$
where $a_1 \le \cdots \le a_r$, \ $b_1 > \cdots > b_r$. Define
$$p(T; a_i, b_i) = i \quad \text{for} \ (a_i, b_i) \in T^{(k)}.$$
Hence if $T$ is a semistandard tableau in $\B(Y)$ with $T_{ij}=k$,
then $p(T; i,j)$ is equal to the number of boxes with entry $k$
that lie in the right of $T_{ij}$ (including $T_{ij}$ itself).

\vskip 5mm

\begin{example} For $T=$  \raisebox{-0.5\height}{
\begin{texdraw}
\drawdim em \setunitscale 0.15  \linewd 0.4  \sqo \sqo \sqt \move
(0 -10) \sqt \sqt \vskip0.5em
\end{texdraw}} , we have
$$T^{(1)}=\{ (1,2), (1,1)\}, \ \ T^{(2)}= \{ (1,3), (2,2), (2,1) \}, $$
and $$p(T;1,1) =2, \ \ p(T;1,2)=1, \ \ p(T;1,3)=1, \ \ p(T;2,1)
=3, \ \ p(T;2,2)=2.$$
\end{example}

\vskip 3mm

\begin{proposition}\cite{NS} \label{prop:NS} Let $Y,W,Z$ be Young diagrams with at most $r$ rows such that
$|Y|+|W|=|Z|$, and let $A$, $A'$ be admissible orders on $Z/Y$,
$W$, respectively. Then there exists a natural bijection
$$\Phi: \P(W,Z/Y; A, A') \longrightarrow
\B(W)_{Y}^{Z}[A']$$ defined by
\begin{equation} \label{eq:phi}
\Phi(f)_{ij} = f_1(i,j) \ \ \text{for} \ f=(f_1, f_2) \in \P(W,
Z/Y;A,A').
\end{equation}
The inverse map
$$\Psi: \B(W)_{Y}^{Z}[A'] \longrightarrow \P(W, Z/Y; A, A')$$ is
given by
\begin{equation} \label{eq:psi} \Psi(T)(i,j) =(T_{ij},
Y_{T_{ij}} + p(T; i,j)) \ \ \text{for} \ T \in \B(W)_{Y}^{Z}[A'].
\end{equation}
\end{proposition}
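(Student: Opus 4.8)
The plan is to verify that \eqref{eq:phi} and \eqref{eq:psi} define mutually inverse maps between the two sets, the organizing device being, for a picture $f\in\P(W,Z/Y;A,A')$, the chain of Young diagrams attached to the reading order $A'$. List the boxes of $W$ as $\beta_1,\dots,\beta_N$ in increasing order for $A'$ and put $W_k=\{\beta_1,\dots,\beta_k\}$. Since $A'$ is admissible, $W_k$ is closed under passing to a box of $W$ lying to its northeast; since $f^{-1}$ is $PA'$-standard, it follows that $f(W_k)$ is closed in $Z/Y$ under $\le_P$ (i.e. under passing to a box lying weakly to the northwest). Hence $Y^{(k)}:=Y\cup f(W_k)$ is a Young diagram for all $k$, with $Y^{(0)}=Y$, $Y^{(N)}=Z$, and $Y^{(k)}$ obtained from $Y^{(k-1)}$ by adjoining the single box $f(\beta_k)$, which sits at the end of row $f_1(\beta_k)$; in particular $f_2(\beta_k)=Y^{(k-1)}_{f_1(\beta_k)}+1$.

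I would first check that $\Phi$ is well defined. Write $T=\Phi(f)$, so $T_{ij}=f_1(i,j)$. The column condition $T_{ij}<T_{i+1,j}$ follows directly: from $(i,j)\le_P(i+1,j)$ we get $f(i,j)\le_A f(i+1,j)$, and if $f_1(i,j)\ge f_1(i+1,j)$ then admissibility of $A$ forces $f_2(i,j)>f_2(i+1,j)$, hence $(i+1,j)\le_{A'}(i,j)$ through $f^{-1}$, contradicting the admissibility of $A'$, which yields $(i,j)\le_{A'}(i+1,j)$. The row condition $T_{ij}\le T_{i,j+1}$ is the delicate point: a row descent would force $f(i,j+1)$ to lie strictly to the northwest of $f(i,j)$, a configuration which is locally consistent with $PA$-standardness, so it must be ruled out globally --- by propagating the descent up the columns until it collides with the monotonicity of $Y^{(0)}\subset\cdots\subset Y^{(N)}$, or via the crystal argument of \cite{NS}. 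Once $T$ is known to be semistandard, $R_{A'}(T)=\boxed{j_1}\otimes\cdots\otimes\boxed{j_N}$ with $j_k=f_1(\beta_k)$, so the chain above identifies $Y[j_1,\dots,j_k]$ with $Y^{(k)}$; thus \eqref{eq:LRcrystal} holds and $T\in\B(W)_Y^Z[A']$.

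Next I would check that $\Psi$ is well defined. For $T\in\B(W)_Y^Z[A']$ with $R_{A'}(T)=\boxed{j_1}\otimes\cdots\otimes\boxed{j_N}$, set $Y^{(k)}=Y[j_1,\dots,j_k]$. The key identity is that, for the box $\beta_k$ bearing entry $m=T_{\beta_k}=j_k$, one has $Y_m+p(T;\beta_k)=Y^{(k-1)}_m+1$: the boxes of $T$ with entry $m$ form a set on which the reading order $A'$ coincides with the row-ascending, column-descending order defining $p$ (true for every admissible reading, by the argument of \cite[Prop.~4.13]{KK}), so $p(T;\beta_k)$ equals $1$ plus the number of entry-$m$ boxes read before $\beta_k$, while $Y^{(k-1)}_m$ equals $Y_m$ plus that same number. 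Hence $\Psi(T)(\beta_k)=(m,Y^{(k-1)}_m+1)$ is exactly the box $Y^{(k)}\setminus Y^{(k-1)}$, so $\Psi(T)$ is a bijection $W\to Z/Y$; that it is $PA$-standard and that $\Psi(T)^{-1}$ is $PA'$-standard then follows from the semistandardness of $T$ and the monotonicity $Y^{(0)}\subset\cdots\subset Y^{(N)}$, checked against the defining inequalities of $\le_A$ and $\le_{A'}$ case by case, the comparison of column coordinates with $\le_A$ again being the point that needs care (it is the dual of the row-semistandardness of $\Phi(f)$).

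Finally $\Phi$ and $\Psi$ are mutually inverse, since each reads an object as the same labelled chain $Y=Y^{(0)}\subset\cdots\subset Y^{(N)}=Z$, the $k$-th box of $W$ in $A'$-order being sent to $Y^{(k)}\setminus Y^{(k-1)}$: one recovers $\Psi(\Phi(f))=f$ from $f_2(\beta_k)=Y^{(k-1)}_{f_1(\beta_k)}+1=Y_{f_1(\beta_k)}+p(\Phi(f);\beta_k)$, and $\Phi(\Psi(T))=T$ from the fact that $T_{ij}$ is the row index of $Y^{(k)}\setminus Y^{(k-1)}$ when $\beta_k=(i,j)$. The one substantial obstacle is the row condition for $\Phi(f)$ (equivalently the $PA$-standardness of $\Psi(T)$): reconciling the bare coordinate formulas with an arbitrary admissible order is exactly where one must use admissibility on $W$, admissibility on $Z/Y$, and the chain $Y^{(k)}$ simultaneously, rather than any one of them alone.
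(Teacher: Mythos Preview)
Your sketch is correct and follows essentially the same three-step outline (well-definedness of $\Phi$, of $\Psi$, and mutual inversion) that the paper inherits from \cite{NS}; note that the paper does not give its own proof of Proposition~\ref{prop:NS} but simply cites \cite{NS}, and its proof of the skew generalization (Theorem~\ref{thm:skew_case}) defers the substantive steps to \cite[Propositions~5.1 and 6.1, Lemmas~5.2--5.3, Section~7]{NS}. Your ``propagating the descent up the columns'' for the row condition is exactly the inductive argument of \cite[Lemmas~5.2--5.3]{NS}, and your chain $Y^{(0)}\subset\cdots\subset Y^{(N)}$ is a clean repackaging of the same bookkeeping.
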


Let $Y$, $Z$ be Young diagrams with at most $r$ rows and let $W$
be a skew Young diagram such that $|Y|+|W|=|Z|$. We define
$\B(W)_{Y}^{Z}[A]$ to be the set of all semistandard skew tableaux
$T$ in $\B(W)$ satisfying the condition \eqref{eq:LRcrystal}. For
a semistandard skew tableau $T$ of shape $W$, we define $p(T;
i,j)$ to be the number of boxes with entry $T_{ij}$ lying in the
right of the $(i,j)$-position (including the box at the
$(i,j)$-position). Our first main result is the following
generalization of Proposition \ref{prop:NS}.

\begin{theorem} \label{thm:skew_case}

Let $Y,Z$ be Young diagrams with at most $r$ rows and let $W$ be a
skew Young diagram such that $|Y|+ |W|=|Z|$. Let $A$ and $A'$ be
arbitrary admissible orders on $Z/Y$ and $W$, respectively. Then
there exist natural bijections
$$\begin{aligned}
& \Phi: \P(W, Z/Y; A, A') \longrightarrow \B(W)_{Y}^{Z}[A'], \\
& \Psi: \B(W)_{Y}^{Z}[A'] \longrightarrow \P(W, Z/Y; A, A')
\end{aligned}$$
defined by \eqref{eq:phi} and \eqref{eq:psi}, which are the
inverses to each other.
\end{theorem}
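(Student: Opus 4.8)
The plan is to mimic the strategy of Proposition~\ref{prop:NS}, the $W$-a-Young-diagram case, and carefully check that each step survives the passage to skew shapes $W$. Concretely, I would proceed in four stages. First, I would verify that $\Psi$ as defined by \eqref{eq:psi} actually lands in $\P(W,Z/Y;A,A')$: given $T\in\B(W)_Y^Z[A']$, set $g=\Psi(T)$ and check (i) that $g$ is a well-defined map into $Z/Y$, i.e.\ that $(T_{ij},Y_{T_{ij}}+p(T;i,j))$ is always a box of $Z$ not in $Y$; (ii) that $g$ is $PA$-standard as a map $W\to Z/Y$; and (iii) that $g^{-1}$ is $PA'$-standard, which in particular forces $g$ to be a bijection. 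For (i), the key point is that condition \eqref{eq:LRcrystal} — that $Y[j_1,\dots,j_k]$ is a Young diagram at every step and terminates at $Z$ — is exactly what guarantees that as we add the boxes with entry $k$ from right to left (this is where $p(T;i,j)$ enters, counting the position in that right-to-left order), each lands in a legal new row of the growing diagram, so the target box $(k,Y_k+p(T;i,j))$ lies in row $k$ within $Z/Y$. The skew-ness of $W$ only changes which positions $(i,j)$ occur; the definition of $p(T;i,j)$ has already been adjusted in the paragraph preceding the theorem to count boxes with entry $T_{ij}$ to the right, so the same counting argument goes through.

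Second, I would verify that $\Phi$ as defined by \eqref{eq:phi} lands in $\B(W)_Y^Z[A']$: given $f=(f_1,f_2)\in\P(W,Z/Y;A,A')$, set $T=\Phi(f)$ with $T_{ij}=f_1(i,j)$, and check that $T$ is a semistandard skew tableau of shape $W$ and that it satisfies \eqref{eq:LRcrystal}. Semistandardness of $T$ — weakly increasing along rows, strictly increasing down columns — follows from the $PA$-standardness of $f$ together with the fact that $A$ is an admissible order on $Z/Y$ (an entry directly to the right or directly below is comparable under $\le_P$, and the admissibility axiom $(a,b)\le_A(c,d)$ whenever $a\le c,\ b\ge d$ converts the $P$-order constraints into the required monotonicity of the first coordinate $f_1$). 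This argument is insensitive to whether $W$ is straight or skew. For \eqref{eq:LRcrystal}, one reads $T$ by the admissible order $A'$ on $W$, obtains $R_{A'}(T)=\boxed{j_1}\otimes\cdots\otimes\boxed{j_N}$, and must show the boxes $f(i,j)$ with $f_1(i,j)=j_k$ are being added to $Y$ in a way compatible with $f^{-1}$ being $PA'$-standard: the $PA'$-standardness of $f^{-1}:Z/Y\to W$ ensures that within each row of $Z/Y$ the preimages appear in the correct $A'$-order, which is precisely the condition that each $Y[j_1,\dots,j_k]$ is a Young diagram; and since $f$ is onto $Z/Y$, we get $Y[j_1,\dots,j_N]=Z$.

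Third, I would check that $\Phi$ and $\Psi$ are mutually inverse, which is mostly a bookkeeping computation: $\Phi(\Psi(T))_{ij}=T_{ij}$ is immediate from the definitions, and $\Psi(\Phi(f))(i,j)=(f_1(i,j),\,Y_{f_1(i,j)}+p(\Phi(f);i,j))$, so one must identify the second coordinate $f_2(i,j)$ with $Y_{f_1(i,j)}+p(\Phi(f);i,j)$. This is the statement that, among the boxes of $W$ mapped by $f$ into row $k=f_1(i,j)$ of $Z/Y$, the box $(i,j)$ is sent to the $p(\Phi(f);i,j)$-th available column of that row counted from the left end $Y_k+1$; this in turn follows because $p$ was defined to count boxes with entry $k$ to the right of $(i,j)$ in the tableau reading order, and $f$ being a picture (both $f$ and $f^{-1}$ order-preserving in the appropriate senses) forces the right-to-left order of equal entries in $W$ to match the left-to-right order of column positions within a fixed row of $Z/Y$. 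Finally, I would note that the independence of $\B(W)_Y^Z[A']$ from the choice of $A'$, claimed in the paragraph before the theorem for skew $W$, should be recorded (or cited from the straight-shape argument via \cite[Proposition 4.13]{KK}), so that the bijection is genuinely natural.

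The main obstacle I anticipate is Stage~1, specifically showing that $\Psi(T)$ is a bijection onto $Z/Y$ rather than merely a map into $Z$. In the straight-shape case of Proposition~\ref{prop:NS} this is handled by the interplay between semistandardness of $T$, the definition of $p$, and property \eqref{eq:LRcrystal}; for skew $W$ the subtlety is that the columns of $W$ need not start at row $1$, so one must be careful that two boxes of $W$ in the same column but belonging to ``different pieces'' of the skew diagram cannot collide under $\Psi$, and that the count $p(T;i,j)$ together with the Young-diagram-growth condition still exhausts exactly the row $Z_k-Y_k$ new boxes in row $k$ of $Z/Y$. I would isolate this as a lemma: for each $k$, the map sending a box of $W$ with entry $k$ to its $p$-value is a bijection from $T^{(k)}$ onto $\{1,\dots,Z_k-Y_k\}$, and this is where the modified definition of $p(T;i,j)$ for skew tableaux does its real work. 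Once that lemma is in place, the rest is a faithful transcription of the argument in \cite{NS}.
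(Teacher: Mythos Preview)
Your outline follows \cite{NS} as the paper does, but there is a real gap in your Stage~2, and it is precisely the place where the skew hypothesis bites. You claim that semistandardness of $\Phi(f)$ ``follows from the $PA$-standardness of $f$ together with the fact that $A$ is an admissible order on $Z/Y$'' and is ``insensitive to whether $W$ is straight or skew.'' Neither is correct. From $(i,j)\le_P(i,j+1)$ you get $f(i,j)\le_A f(i,j+1)$, but admissibility of $A$ is a one-way implication and does \emph{not} let you read off $f_1(i,j)\le f_1(i,j+1)$; if $f_1(i,j)>f_1(i,j+1)$ one only deduces $f_2(i,j)>f_2(i,j+1)$, and then $f^{-1}$ being $PA'$-standard merely gives $(i,j+1)\le_{A'}(i,j)$, which is already forced by admissibility of $A'$ and yields no contradiction. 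The actual argument (already in \cite{NS} for straight $W$) is an induction on the row index that, assuming $f_1(i,j)>f_1(i,j+1)$, produces via \cite[Lemmas~5.2--5.3]{NS} a box $(k,l)\in W$ with $k<i$ and $l\le j$ and specific image coordinates, and then pushes the contradiction up to row $i-1$.

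That induction step is exactly where skewness matters: for straight $W$ the intermediate boxes $(i-1,j)$ and $(i-1,j+1)$ lie in $W$ automatically, but for skew $W$ they need not. The paper's key new ingredient is the convexity lemma for skew shapes: if $(a,b),(c,d)\in W$ with $(a,b)\le_P(c,d)$, then every $(x,y)$ with $(a,b)\le_P(x,y)\le_P(c,d)$ lies in $W$. This is what forces $(i-1,j)$ and $(i-1,j+1)$ into $W$ in the base case (at the minimal row $i_0$ with two adjacent boxes), contradicting minimality, and it is invoked repeatedly in the inductive step. So you have misidentified the main obstacle: the paper says the bijectivity of $\Psi(T)$ onto $Z/Y$ (your Stage~1 worry) is easily verified, while the row-weak-increase in $\Phi(f)$ --- which you dismiss as routine --- is where the new lemma does its work. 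Your Stage~1 discussion also misses the one genuinely new wrinkle on the $\Psi$ side: for skew $W$ the crystal $\B(W)$ may be disconnected, but boxes in different connected components are $\le_P$-incomparable, so $PA$-standardness can be checked componentwise.
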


\begin{proof} Our proof follows the outline given in \cite{NS}. The
key ingredient of our generalization is the following almost
self-obvious lemma on skew Young diagrams.

\begin{lemma} \label{lem:skew}
Let $W$ be a skew Young diagram. If $(a,b), (c,d) \in W$ and $a
\le c, b \le d$, then every $(x,y)$ satisfying
$$(a, b) \le_{P} (x,y) \le_{P} (c,d)$$
lies in W.
\end{lemma}

We now proceed to prove our theorem in 3 steps.

\vskip 3mm

\noindent {\bf Step 1:} {\it The map $\Phi$ is well-defined.}

Let $f=(f_1, f_2) \in \P(W, Z/Y; A, A')$. We first show that
$\Phi(f)$ is a semistandard skew tableau of shape $W$. That is, we
show

\ \ (i) $f_1(i,j) < f_1(i+1, j)$ for all $(i,j) \in W$,

\ \ (ii) $f_1(i,j) \le f_1(i, j+1)$ for all $(i,j) \in W$.

The condition (i) can be verified by the same argument in
\cite[Proposition 5.1]{NS}. We will prove the condition (ii) using
the induction on $i$. Let
$$i_{0} = \min \{ i \in \N \mid (i,j) \in W, \ (i, j+1) \in W \}.$$
Thus $W$ has more than two boxes in the $i_0$-th row for the first
time.

If $i=i_0$, suppose on the contrary that $f_1(i_0, j) > f_1(i_0,
j+1)$. Then by \cite[Lemma 5.2]{NS}, we have $f_2(i_0, j) >
f_2(i_0, j+1)$. Moreover, by \cite[Lemma 5.3]{NS}, there exists a
unique $(k,l) \in W$ satisfying
\begin{equation} \label{eq:NS5.3}
k<i_0, \ l \le j, \ \ f_1(k,l) = f_1(i_0, j+1), \ f_2(k,l) =
f_2(i_0, j).
\end{equation}
Since
$$(k,l) \le_{P} (i_0 -1, j) \le_{P} (i_0, j) \ \ \text{for} \ (k,l),
(i_0, j) \in W,$$ by Lemma \ref{lem:skew}, we have $(i_0 -1, j)
\in W$. Applying Lemma \ref{lem:skew} to $(i_0-1, j)$ and $(i_0,
j+1)$, we get $(i_0-1, j+1) \in W$. Hence we have $(i_0 -1, j) \in
W$ and $(i_0 -1, j+1) \in W$, which is a contradiction to the
minimality of $i_0$. Therefore, we conclude $f_1(i_0, j) \le
f_1(i_0, j+1)$ for all $j$ with $(i_0, j), (i_0, j+1) \in W$.

If $i>i_0$, by a similar argument in the proof of
\cite[Proposition 5.1 (i)]{NS} using Lemma \ref{lem:skew} whenever
necessary, one can verify the condition (ii). Hence $\Phi(f)$ is a
semistandard skew tableau of shape $W$.

Using almost the same argument in the proof of \cite[Proposition
5.1 (ii)]{NS}, it is straightforward to verify that $\Phi(f)$
satisfies the condition \eqref{eq:LRcrystal} with respect to $A'$.
Therefore, $\Phi$ is well-defined.

\vskip 3mm

\noindent {\bf Step 2:} {\it The map $\Psi$ is well-defined.}

Let $T \in \B(W)_{Y}^{Z}[A']$. One can easily verify that
$\Psi(T)$ is a bijection from $W$ to $Z/Y$. It remains to show
$\Psi(T)$ is an $(A, A')$-admissible picture. For this purpose,
one can verify that almost all the arguments in the proof of
\cite[Proposition 6.1]{NS} work in our case as well. The only
difference is that, for a skew Young diagram $W$, the
$U_q(\gl(r))$-crystal $\B(W)$ may not be connected. However, since
the boxes in different connected components are not comparable
with respect to $P$, it suffices to show that $\Psi(T)$ is
$PA$-standard on each connected component, which can be checked in
a straightforward manner combining Lemma \ref{lem:skew} and the
proof of \cite[Proposition 6.1 (3)]{NS}.

\vskip 3mm

\noindent {\bf Step 3:} {\it $\Phi$ and $\Psi$ are inverses to
each other.}

The same argument in \cite[Section 7]{NS} works in our case as
well.
\end{proof}

\vskip 1cm


\section{$U_q(\gl(m,n))$-Littlewood-Richardson tableaux}

In this section, we will prove the main result of this paper. We
will define the notion of {\it
$U_q(\gl(m,n))$-Littlewood-Richardson tableaux} and show that
there exists a natural bijection between the set of admissible
pictures and the set of $U_q(\gl(m,n))$-Littlewood-Richardson
tableaux. One may refer to \cite{BKK} and \cite{KK} for more
details on $U_q(\gl(m,n))$-crystals.

Let $$\mathfrak{B}: \ \ \boxed{1} \overset{1} \longrightarrow
\boxed{2} \overset{2} \longrightarrow \cdots \overset{m-1}
\longrightarrow \boxed{m} \overset{m} \longrightarrow
\boxed{\bar{1}} \overset{\bar {1}} \longrightarrow \boxed{\bar{2}}
\overset{\bar{2}} \longrightarrow \cdots \overset{\overline{n-1}}
\longrightarrow \boxed{\bar{n}}$$ be the crystal of the vector
representation of $U_q(\gl(m,n))$. We define an ordering on
$\mathfrak{B}$ by
$$1 < 2 < \cdots < m < \bar{1} < \bar{2} < \cdots < \bar{n},$$
and set $\mathfrak{B}_{+}=\{1,2, \cdots, m\}$,
$\mathfrak{B}_{-}=\{\bar{1},\bar{2}, \cdots, \bar{n}\}$.

\begin{definition} Let $Y$ be a skew Young diagram. A {\it
$\gl(m,n)$-semistandard skew tableau of shape $Y$} is a tableau
$T$ obtained from $Y$ by filling the boxes with entries from
$\mathfrak{B}$ satisfying the following conditions:

(i) the entries in each row and column are weakly increasing,

(ii) the entries in $\mathfrak{B}_{+}$ are strictly increasing in
each column,

(iii) the entries in $\mathfrak{B}_{-}$ are strictly increasing in
each row.
\end{definition}

We denote by $\mathfrak{B}(Y)$ the set of all
$\gl(m,n)$-semistandard tableaux of shape $Y$. In \cite{BKK}, it
was shown that an admissible reading provides $\mathfrak{B}(Y)$
with a $U_q(\gl(m,n))$-crystal structure and it does not depend on
the choice of admissible reading.

A Young diagram $Y$ is called an {\it $(m,n)$-hook Young diagram}
if the number of boxes in $(m+1)$-th row is at most $n$; i.e.,
there is no box in the $(m+1, n+1)$-th position. We denote by
$H(m,n)$ the set of all $(m,n)$-hook Young diagrams. Note that a
Young diagram $Y$ can be made into a $\gl(m,n)$-semistandard
tableau if and only if $Y$ is an $(m,n)$-hook Young diagram. To
describe the decomposition of the tensor product of
$U_q(\gl(m,n))$-crystals, we need the following definitions.

Let $Y$ be a skew Young diagram with an admissible order $A$ and
let $Q$ be a semistandard skew tableau of shape $Y$ with entries
in $\N$. The {\it word of $Q$ with respect to $A$} is defined to
be
$$w_{A}(Q) = (i_1, i_2, \cdots, i_N), $$
where $R_{A}(Q) = \boxed{i_1} \ot \cdots \ot \boxed{i_N}$, and the
{\it content} of $Q$ is defined to be
$$\text{cont}(Q)= (\mu_i)_{i \in \N},$$
where $\mu_i$ is the number of $i$'s in $Q$. A finite sequence
$(i_1, \cdots, i_N)$ is called a {\it lattice permutation} if for
every $i \in \N$ and $k$ with $1 \le k \le N$, the number of
occurrences of $i$ in $(i_1, \cdots, i_k)$ is greater than or
equal to the number of occurrences of $i+1$ in $(i_1, \cdots,
i_k)$.

\begin{definition}
Let $Y$, $W$, $Z$ be $(m,n)$-hook Young diagrams with $|Y|+|W|
=|Z|$ and let $A$ be an admissible order on $Z/Y$. A {\it
$U_q(\gl(m,n))$-Littlewood-Richardson tableau associated with
$(Y,W,Z)$ and $A$} is a semistandard skew tableau $Q$ with entries
in $\N$ satisfying the following conditions:

(i) $\sh(Q) = Z/Y$,

(ii) $\text{cont}(Q)=W$,

(iii) $w_{A}(Q)$ is a lattice permutation.

\end{definition}

We denote by $LR(Y,W)^{Z}[A]$ the set of all
$U_q(\gl(m,n))$-Littlewood-Richardson tableaux associated with
$(Y, W, Z)$ and $A$. Using the insertion scheme for
$U_q(\gl(m,n))$-crystals, it was shown in \cite[Proposition
4.13]{KK} that $LR(Y, W)^{Z}[A]$ are the same for all admissible
orders on $Z/Y$. The number $N_{Y,W}^{Z} = |LR(Y,W)^{Z}[A]|$ is
called the {\it $U_q(\gl(m,n))$-Littlewood-Richardson coefficient}
associated with $(Y,W,Z)$. Moreover, \cite[Theorem 4.16]{KK}
yields the decomposition of the tensor product of
$U_q(\gl(m,n))$-crystals:
\begin{equation} \label{eq:gl(m,n)-decomp}
\mathfrak{B}(Y) \ot \mathfrak{B}(W) \cong \bigoplus_{Z \in H(m,n)}
\mathfrak{B}(Z)^{\oplus N_{Y,W}^{Z}}.
\end{equation}

We now state and prove our main theorem.

\begin{theorem} \label{thm:main}
Let $Y$, $W$, $Z$ be $(m,n)$-hook Young diagrams with
$|Y|+|W|=|Z|$, and let $A$, $A'$ be admissible orders on $W$,
$Z/Y$, respectively. Then there exists a natural bijection
$$\widetilde{\Phi}: \P(Z/Y, W; A, A') \longrightarrow LR(Y,W)^{Z}[A']$$
defined by
\begin{equation} \label{eq:tphi}
\widetilde{\Phi}(f)_{ij} = f_1(i,j) \ \ \text{for} \ f=(f_1, f_2)
\in \P(Z/Y, W; A, A').
\end{equation}
The inverse map $$\widetilde{\Psi}: LR(Y,W)^{Z}[A']
\longrightarrow \P(Z/Y,W; A, A')$$ is given by
\begin{equation} \label{eq:tpsi}
\widetilde{\Psi}(Q)(i,j) = (Q_{ij}, p(Q;i,j)) \ \ \text{for} \ Q
\in LR(Y,W)^{Z}[A'].
\end{equation}
\end{theorem}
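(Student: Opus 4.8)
The plan is to reduce Theorem \ref{thm:main} to the already-established skew-case bijection of Theorem \ref{thm:skew_case}. The point is that both sides of the $U_q(\gl(m,n))$ statement decompose according to the hook shape of $Z$, and the $U_q(\gl(m,n))$-Littlewood-Richardson tableaux are built from $U_q(\gl(r))$-Littlewood-Richardson data via the standard ``hook dictionary'': an $(m,n)$-hook Young diagram is a $U_q(\gl(r))$-object for $r$ large once we record which boxes carry barred versus unbarred entries. Concretely, I would first observe that the roles of the two admissible orders are swapped between the two theorems (in Theorem \ref{thm:skew_case}, $A$ is on $Z/Y$ and $A'$ is on $W$; in Theorem \ref{thm:main}, $A$ is on $W$ and $A'$ is on $Z/Y$), so the relevant comparison is between $\P(Z/Y,W;A,A')$ here and $\P(W,Z/Y;A',A)$ there — and these two picture sets are identified by $f \mapsto f^{-1}$, since $f$ is an $(A,A')$-admissible picture from $Z/Y$ to $W$ iff $f^{-1}$ is an $(A',A)$-admissible picture from $W$ to $Z/Y$, directly from Definition of admissible picture.

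Next I would set up the core combinatorial identification. Given $f = (f_1,f_2) \in \P(Z/Y,W;A,A')$, the formula \eqref{eq:tphi} fills the box $(i,j) \in Z/Y$ with $f_1(i,j)$, i.e.\ with the row-index in $W$ to which $(i,j)$ is sent; this is exactly the shape-$W$-content information, so $\mathrm{cont}(\widetilde\Phi(f)) = W$ is immediate from bijectivity of $f$, and condition (i) $\sh(\widetilde\Phi(f)) = Z/Y$ is automatic. The content that $\widetilde\Phi(f)$ is a $\gl(m,n)$-semistandard skew tableau and that $w_{A'}(\widetilde\Phi(f))$ is a lattice permutation must be extracted from the $PA$-standardness of $f$ and the $PA'$-standardness of $f^{-1}$, respectively; here is where I expect the real work. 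For semistandardness: weak increase of $f_1$ along rows and columns of $Z/Y$ follows from $PA$-standardness of $f = (f_1,f_2)$ applied along $\le_P$ together with the argument of Step 1 of Theorem \ref{thm:skew_case} (using Lemma \ref{lem:skew}); the \emph{strictness} conditions (ii) and (iii) of the $\gl(m,n)$-semistandard definition are where the hook structure enters, and one needs to feed in the two-sided nature of admissibility (that $(a,b) \le_A (c,d)$ whenever $(a,b)$ is northeast of $(c,d)$) to separate the behavior on $\mathfrak B_+$ versus $\mathfrak B_-$. For the lattice-permutation property, I would translate it into the condition \eqref{eq:LRcrystal} via the $U_q(\gl(m,n))$-crystal characterization of $LR(Y,W)^Z[A']$ — a skew tableau $Q$ with $\mathrm{cont}(Q) = W$ has $w_{A'}(Q)$ a lattice permutation precisely when adding the boxes of $Q$ row-by-row (in $A'$-order) to $Y$ keeps us inside Young diagrams and lands at $Z$ — and then invoke the $PA'$-standardness of $f^{-1}$ exactly as in Step 1(ii) of the proof of Theorem \ref{thm:skew_case}, which is the argument mirrored from \cite[Proposition 5.1 (ii)]{NS}.

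For the inverse map $\widetilde\Psi$, I would check that \eqref{eq:tpsi} genuinely inverts \eqref{eq:tphi}: given $Q \in LR(Y,W)^Z[A']$, the assignment $(i,j) \mapsto (Q_{ij}, p(Q;i,j))$ lands in $W$ because $\mathrm{cont}(Q) = W$ guarantees that the $Q_{ij}$-th row of $W$ has at least $p(Q;i,j)$ boxes (this is the $\gl(m,n)$-analogue of the statement ``$Y_{T_{ij}} + p(T;i,j) \le Z_{T_{ij}}$'' from Proposition \ref{prop:NS}, with the $Y$-offset now absent since $W$ itself, not $Z/Y$, is the target), and $p$ being the right-counting function makes it a bijection onto $W$. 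That $\widetilde\Psi(Q)$ and its inverse are respectively $PA$- and $PA'$-standard follows from the argument of Step 2 of Theorem \ref{thm:skew_case} — including the remark there that $\mathfrak B(W)$ for skew $W$ may be disconnected, so one argues componentwise — once one knows the $\gl(m,n)$-semistandardness and lattice-permutation hypotheses on $Q$. Finally, $\widetilde\Phi \circ \widetilde\Psi = \mathrm{id}$ and $\widetilde\Psi \circ \widetilde\Phi = \mathrm{id}$ are the bookkeeping identities $\widetilde\Phi(\widetilde\Psi(Q))_{ij} = Q_{ij}$ and $p(\widetilde\Phi(f);i,j) = f_2(i,j)$, the latter being exactly the content of \cite[Section 7]{NS} adapted to the skew setting (and already invoked in Step 3 of Theorem \ref{thm:skew_case}).

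\textbf{Main obstacle.} The genuinely new point over Theorem \ref{thm:skew_case} is verifying that $\widetilde\Phi(f)$ satisfies the \emph{three-part} $\gl(m,n)$-semistandard condition rather than the ordinary two-part one — in particular that the column-strictness on $\mathfrak B_+$ and row-strictness on $\mathfrak B_-$ come out correctly from $PA$-standardness of $f$. I expect this to require a careful case analysis using that the target $W$ is an $(m,n)$-hook diagram, with the ``arm'' of the hook (first $m$ rows) controlling the $\mathfrak B_+$-columns and the ``leg'' (rows beyond $m$, of width $\le n$) controlling the $\mathfrak B_-$-rows, and that admissibility of the orders is precisely the hypothesis that makes the northeast-comparison interact correctly with this decomposition. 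Everything else is a faithful transcription of the arguments in \cite{NS} and in the proof of Theorem \ref{thm:skew_case}, with Lemma \ref{lem:skew} supplying the skew-shape convexity wherever a box in between two given boxes must be shown to lie in the diagram.
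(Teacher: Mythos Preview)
Your proposal contains a genuine misreading of the definition of $U_q(\gl(m,n))$-Littlewood-Richardson tableaux, and this misreading is precisely what you flag as the ``main obstacle.'' Look again at the definition: a $U_q(\gl(m,n))$-Littlewood-Richardson tableau is a \emph{semistandard skew tableau $Q$ with entries in $\N$} of shape $Z/Y$, content $W$, whose $A'$-word is a lattice permutation. It is \emph{not} a $\gl(m,n)$-semistandard tableau with entries in $\mathfrak{B}=\{1,\dots,m,\bar 1,\dots,\bar n\}$; there is no three-part condition, no $\mathfrak{B}_+$/$\mathfrak{B}_-$ split, and no hook-arm/hook-leg case analysis to perform. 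The entries of $\widetilde\Phi(f)$ are row-indices in $W$, i.e.\ positive integers, so the only semistandardness in play is the ordinary one. Your ``main obstacle'' is therefore a phantom.

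Once this is corrected, the proof collapses to a one-line reduction, which is exactly what the paper does. The lattice-permutation condition on $w_{A'}(Q)=(i_1,\dots,i_N)$ together with $\mathrm{cont}(Q)=W$ is literally the statement that $\emptyset[i_1,\dots,i_k]$ is a Young diagram for every $k$ and that $\emptyset[i_1,\dots,i_N]=W$; hence
\[
LR(Y,W)^{Z}[A'] \;=\; \B(Z/Y)_{\emptyset}^{W}[A'].
\]
Now apply Theorem \ref{thm:skew_case} with the substitution $(Y,W,Z)\leadsto(\emptyset,\,Z/Y,\,W)$: the skew diagram there is $Z/Y$, the target ``$Z/Y$'' there becomes $W/\emptyset=W$, the orders match without any swap, and the formulas \eqref{eq:phi}, \eqref{eq:psi} specialize (since $\emptyset_{Q_{ij}}=0$) exactly to \eqref{eq:tphi}, \eqref{eq:tpsi}. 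No inversion $f\mapsto f^{-1}$ is needed for Theorem \ref{thm:main}; that device appears only in Corollary \ref{crystal}, where one passes from $LR(Y,W)^{Z}[A']$ to $\B(W)_{Y}^{Z}[A]$. The $(m,n)$-hook hypothesis on $Y,W,Z$ plays no role in the bijection itself; it only ensures that the objects on both sides are the ones arising from $U_q(\gl(m,n))$-crystal theory.
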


\begin{proof}
Write $W=(W_1 \ge W_2 \ge \cdots \ge W_k >0)$ and $Z=(Z_1 \ge Z_2
\ge \cdots \ge Z_h >0)$. Set $r=\max(k,h)$ and consider the set
$\B(Z/Y)$ of semistandard skew tableaux of shape $Z/Y$ with
entries in $\{1, 2, \cdots, r \}$. We claim
$$LR(Y,W)^{Z}[A'] = \B(Z/Y)_{\emptyset}^{W}[A'].$$

Let $Q \in LR(Y,W)^{Z}[A']$. Then $Q$ is a semistandard skew
tableau of shape $Z/Y$ with entries in $\N$ such that
$\text{cont}(Q)=W$ and $w_{A'}(Q)$ is a lattice permutation.
Write $R_{A'}(Q)= \boxed{i_1} \ot \cdots \ot \boxed{i_N}$. Since
$w_{A'}(Q)=(i_1, \cdots ,i_N)$ is a lattice permutation,
$\emptyset[i_1, \cdots, i_r]$ is a Young diagram for all $r=1,
\cdots, N$. Moreover, since $\text{cont}(Q)=W$, we have
$\emptyset[i_1, \cdots, i_N] =W$. Hence $Q \in
\B(Z/Y)_{\emptyset}^{W}[A']$.

Conversely, if $T \in \B(Z/Y)_{\emptyset}^{W}[A']$, then a similar
argument shows $T \in LR(Y,W)^{Z}[A']$, which proves our claim.

By Theorem \ref{thm:skew_case}, there exist natural bijections
\begin{equation*}
\begin{aligned}
& \Phi: \P(Z/Y, W; A, A') \longrightarrow
\B(Z/Y)_{\emptyset}^{W}[A'], \\
& \Psi: \B(Z/Y)_{\emptyset}^{W}[A'] \longrightarrow \P(Z/Y, W;
A,A')
\end{aligned}
\end{equation*}
defined by
\begin{equation*}
\begin{aligned}
& \Phi(f)_{ij}=f_1(i,j) \ \ \text{for} \ f = (f_1, f_2) \in
\P(Z/Y,
W; A, A'), \\
& \Psi(Q)(i,j) = (Q_{ij}, p(Q; i,j)) \ \ \text{for} \ Q \in
\B(Z/Y)_{\emptyset}^{W}[A'].
\end{aligned}
\end{equation*}
Since $LR(Y,W)^{Z}[A'] = \B(Z/Y)_{\emptyset}^{W}[A']$, we are
done.
\end{proof}

\vskip 3mm

\begin{corollary} \label{crystal}
Let $Y, W, Z$ be $(m,n)$-hook Young diagrams and let $A$, $A'$ be
admissible orders on $W$, $Z/Y$, respectively. Write $W=(W_1 \ge
W_2 \ge \cdots \ge W_k >0)$, $Z=(Z_1 \ge Z_2 \ge \cdots \ge Z_h
>0)$ and set $r=\max(k,h)$. Then there exists a natural bijection
$$\widehat{\Phi}: LR(Y,W)^{Z}[A'] \longrightarrow
\B(W)_{Y}^{Z}[A]$$ between the set of
$U_q(\gl(m,n))$-Littlewood-Richardson tableaux and the set of
$U_q(\gl(r))$-Littlewood-Richardson tableaux. In particular, we
have $N_{Y,W}^{Z}=c_{Y,W}^{Z}$.
\end{corollary}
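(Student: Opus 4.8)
The plan is to realize $\widehat{\Phi}$ as a composite of three bijections, two coming from Theorem~\ref{thm:skew_case} and one being the elementary ``inverse of a picture'' map. First, recall that in the proof of Theorem~\ref{thm:main} we already identified $LR(Y,W)^{Z}[A'] = \B(Z/Y)_{\emptyset}^{W}[A']$, where $\B(Z/Y)$ denotes the set of semistandard skew tableaux of shape $Z/Y$ with entries in $\{1,\dots,r\}$, and we have at hand the bijection $\Psi\colon \B(Z/Y)_{\emptyset}^{W}[A'] \longrightarrow \P(Z/Y,W;A,A')$, $Q\mapsto\bigl((i,j)\mapsto(Q_{ij},\,p(Q;i,j))\bigr)$, obtained from Theorem~\ref{thm:skew_case} with skew diagram $Z/Y$ and Young diagrams $\emptyset$, $W$ (legitimate since $|\emptyset|+|Z/Y|=|W|$ and $\emptyset$, $W$ have at most $r$ rows). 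This is the first arrow.

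Next I would apply Theorem~\ref{thm:skew_case} a second time, now with skew diagram $W$ and Young diagrams $Y$, $Z$. The hypotheses hold: $Z$ has $h\le r$ rows, hence $Y\subseteq Z$ has at most $r$ rows, and $|Y|+|W|=|Z|$. Taking the admissible order on $W$ to be $A$ and the one on $Z/Y$ to be $A'$, the theorem produces a bijection $\Phi'\colon \P(W,Z/Y;A',A) \longrightarrow \B(W)_{Y}^{Z}[A]$ onto the set of $U_q(\gl(r))$-Littlewood-Richardson tableaux associated with $(Y,W,Z)$; this is the last arrow.

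The middle arrow is $f\mapsto f^{-1}$. By definition an element of $\P(Z/Y,W;A,A')$ is a bijection $f\colon Z/Y\to W$ such that $f$ is $PA$-standard ($A$ being the admissible order on the target $W$) and $f^{-1}$ is $PA'$-standard ($A'$ being the admissible order on the source $Z/Y$); read the other way round, this says precisely that $f^{-1}\colon W\to Z/Y$ belongs to $\P(W,Z/Y;A',A)$. Since $g\mapsto g^{-1}$ is a two-sided inverse, this is a bijection $\P(Z/Y,W;A,A')\longrightarrow\P(W,Z/Y;A',A)$. Setting $\widehat{\Phi}=\Phi'\circ(f\mapsto f^{-1})\circ\Psi$ yields the asserted bijection $LR(Y,W)^{Z}[A']\longrightarrow\B(W)_{Y}^{Z}[A]$; unwinding the three formulas, $\widehat{\Phi}(Q)_{ij}$ is just the row index of the box of $Q$ at which the picture $\Psi(Q)$ takes the value $(i,j)\in W$, i.e. the row of the $j$-th box of entry $i$ in $Q$ counted from the right. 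In particular $N_{Y,W}^{Z}=\lvert LR(Y,W)^{Z}[A']\rvert=\lvert \B(W)_{Y}^{Z}[A]\rvert=c_{Y,W}^{Z}$.

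I do not expect a genuine obstacle: the proof is purely a matter of composition. The only points requiring care are the bookkeeping of which admissible order is attached to which diagram, and to which side (source or target), at each invocation of Theorem~\ref{thm:skew_case}, together with the routine check that for $r=\max(k,h)$ all the ``at most $r$ rows'' hypotheses of that theorem are met.
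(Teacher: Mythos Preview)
Your proposal is correct and follows essentially the same route as the paper: the paper defines $\widehat{\Phi}=\Phi\circ\Omega\circ\widetilde{\Psi}$, where $\Omega$ is your ``inverse of a picture'' map, $\widetilde{\Psi}$ is exactly your first arrow $\Psi$ (via the identification $LR(Y,W)^{Z}[A']=\B(Z/Y)_{\emptyset}^{W}[A']$ established in the proof of Theorem~\ref{thm:main}), and $\Phi$ is your $\Phi'$ from the second application of Theorem~\ref{thm:skew_case}. Your extra checks on the ``at most $r$ rows'' hypotheses and the bookkeeping of which order sits on which diagram are all fine.
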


\begin{proof}
By the definition of admissible pictures, the map
\beq \begin{array}{ccccc} & \Omega: &\P(Z/Y,W;A,A') & \ra & \P(W,Z/Y;A',A)\\
              & &    f  & \map & f^{-1}
\end{array} \eeq
is a bijection. Hence the composition $\widehat{\Phi} = \Phi \circ
\Omega \circ \widetilde{\Psi}$ is the desired bijection.
\end{proof}

\begin{example}
Let $Y=(5,2,1), \ W=(3,2,2,1)$ and $Z=(6,4,2,2,2)$ be $(3,3)$-hook
Young diagrams. We present the 1-1 correspondence between
$LR(Y,W)^Z[ME]$ for $\gl(3,3)$ and $\B(W)_{Y}^{Z}[ME]$ for
$\gl(5)$ given in the proof of Corollary \ref{crystal}.
\begin{center}

  \hskip2em $LR(Y,W)^Z[ME]$         \xymatrix{\ar@{<->}[rr] &&}        $\ \ \ \ \mathcal{B}(W)_{Y}^{Z}[ME]$ \vskip3mm

 \raisebox{-0.5\height}{
\begin{texdraw}
\drawdim em \setunitscale 0.15  \linewd 0.4  \dsq \dsq \dsq  \dsq
\dsq  \sqo \move(0 -10) \dsq \dsq \sqo \sqo \move(0 -20) \dsq \sqt
\move(0 -30) \sqt \sqth \move(0 -40) \sqth \sqf
 \move(55 -7) \arrowheadsize l:3 w:2 \avec(55 0) \htext(53 -15){\small{$1$}: \  $p(Q;i,j)$}
  \move(35 -17) \arrowheadsize l:3 w:2 \avec(35 -10) \htext(33 -23){\small{2}}
  \move(25 -17) \arrowheadsize l:3 w:2 \avec(25 -10) \htext(23 -23){{\small 3}}

  \move(50 0) \bsegment \move(0
0)\lvec(10 0)\lvec(10 10)\lvec(0 10)\lvec(0 0) \lfill f:0.8
\esegment

  \move(30 -10) \bsegment \move(0
0)\lvec(10 0)\lvec(10 10)\lvec(0 10)\lvec(0 0) \lfill f:0.5
\esegment

 \vskip1em
\end{texdraw}}   \xymatrix{\ar@{<->}[rr] &&} \raisebox{-0.5\height}{
\begin{texdraw}
\drawdim em \setunitscale 0.15  \linewd 0.4  \sqo \sqt \sqt
\move(0 -10) \sqth \sqf \move(0 -20) \sqf \sqfi \move(0 -30) \sqfi

 \move(0 0) \bsegment \move(0
0)\lvec(10 0)\lvec(10 10)\lvec(0 10)\lvec(0 0) \lfill f:0.8
\esegment

  \move(10 0) \bsegment \move(0
0)\lvec(10 0)\lvec(10 10)\lvec(0 10)\lvec(0 0) \lfill f:0.5
\esegment

\move(5 18) \arrowheadsize l:3 w:2 \avec(5 10) \htext(3
20){{\small 1}}
 \move(15 18) \arrowheadsize l:3 w:2 \avec(15 10)  \htext(13 20){{\small 2}}
\move(25 18) \arrowheadsize l:3 w:2 \avec(25 10)     \htext(23
18){{\small 1 : $p(T;i,j)$}}

\vskip1em
\end{texdraw}}

\hskip3em \fbox{\parbox[c]{4cm}{the box whose entry is $Q_{i,j}$
at $(i,j)$}}  \xymatrix{\ar@{|->}^-{\Phi \circ \Omega \circ
\widetilde{\Psi}}[rr] &&}  \fbox{\parbox[c]{4cm}{the box whose
entry is $i$ at $(Q_{i,j}, p(Q;i,j))$}} \end{center}

\vskip 2mm

For example,

\begin{center}
\hskip2em {\parbox[c]{4cm}{the box whose entry is 1 and
$p(Q;1,6)=1$ at (1,6)}}  \xymatrix{\ar@{|->}[rr] &&}
{\parbox[c]{4cm}{ the box whose entry is 1 at (1,1)}} \vskip3mm
\hskip2em {\parbox[c]{4cm}{the box whose entry is 1 and
$p(Q;2,4)=2$ at (2,4)}}  \xymatrix{\ar@{|->}[rr] &&}
{\parbox[c]{4cm}{ the box whose entry is 2 at (1,2)}}

\vskip1em

\hskip3em \fbox{\parbox[c]{4cm}{the box whose entry is $i$ at
$(T_{i,j},Y_{T_{i,j}}+p(T;i,j))$}}
\xymatrix{\ar@{<-|}^-{\widetilde{\Phi} \circ \Omega \circ
\Psi}[rr] &&} \fbox{\parbox[c]{4cm}{the box whose entry is
$T_{i,j}$ at $(i,j)$}}
\end{center}

\vskip 2mm

For instance,

\begin{center}
\hskip2em {\parbox[c]{4cm}{the box whose entry is 1 at (1,5+1)}}
\xymatrix{\ar@{<-|}[rr] &&}   {\parbox[c]{4cm}{ the box whose
entry is 1 and $p(T;1,1)=1$ at (1,1)}} \vskip3mm \hskip2em
{\parbox[c]{4cm}{the box whose entry is 1 at (2,2+2)}}
\xymatrix{\ar@{<-|}[rr] &&}   {\parbox[c]{4cm}{ the box whose
entry is 2 $p(T;1,2)=2$ at (1,2)}}
\end{center}

\vskip3mm

Similarly, we have the following correspondence: \vskip3mm

\begin{center}
\raisebox{-0.5\height}{
\begin{texdraw}
\drawdim em \setunitscale 0.15  \linewd 0.4  \dsq \dsq \dsq  \dsq
\dsq  \sqo \move(0 -10) \dsq \dsq \sqo \sqt \move(0 -20) \dsq \sqt
\move(0 -30) \sqo \sqth \move(0 -40) \sqth \sqf \vskip1em
\end{texdraw}}    \xymatrix{\ar@{<->}[rr] &&}
\raisebox{-0.5\height}{
\begin{texdraw}
\drawdim em \setunitscale 0.15  \linewd 0.4  \sqo \sqt \sqf  \move
(0 -10) \sqt \sqth \move(0 -20) \sqf \sqfi \move(0 -30) \sqfi
\vskip1em
\end{texdraw}}

\vskip1em \raisebox{-0.5\height}{\begin{texdraw} \drawdim em
\setunitscale 0.15  \linewd 0.4  \dsq \dsq \dsq  \dsq   \dsq  \sqo
\move(0 -10) \dsq \dsq \sqo \sqt \move(0 -20) \dsq \sqo  \move(0
-30) \sqt \sqth \move(0 -40) \sqth \sqf  \vskip1em \end{texdraw}}
\xymatrix{\ar@{<->}[rr] &&}  \raisebox{-0.5\height}{
\begin{texdraw}
\drawdim em \setunitscale 0.15  \linewd 0.4  \sqo \sqt \sqth \move
(0 -10) \sqt \sqf \move(0 -20) \sqf \sqfi \move(0 -30) \sqfi
\vskip1em
\end{texdraw}}

\end{center}

\end{example}

\begin{remark}
The correspondence in Corollary \ref{crystal} is the same as that
of Theorem C in the appendix of \cite{N}.
\end{remark}

\vskip 3mm

\begin{acknowledgement}
The authors would like to thank Professor Jae-Hoon Kwon for many
valuable advices and stimulating discussions.
\end{acknowledgement}

\vskip 5mm


\end{document}